\documentclass[a4paper,12 pt,reqno,twoside]{amsart}

\usepackage{comment}

\usepackage{amssymb}
\usepackage{latexsym}

\theoremstyle{plain}
\newtheorem{propn}{Proposition}[section]
\newtheorem{thm}[propn]{Theorem}
\newtheorem{lemma}[propn]{Lemma}

\theoremstyle{definition}

\theoremstyle{remark}
\newtheorem*{rem}{Remark}
\newtheorem*{rems}{Remarks}
\newtheorem*{eg}{Example}

\newcommand{\ve}{\varepsilon}

\newcommand{\La}{\Lambda}
\newcommand{\vp}{\varpi}


\newcommand{\Kil}{\mathsf{K}}
\newcommand{\kil}{\mathsf{k}}

\newcommand{\init}{\mathfrak{h}}

\newcommand{\FFock}{\mathcal{F}}
\newcommand{\Fock}{\Gamma}
\newcommand{\Exps}{\mathcal{E}}
\newcommand{\Dy}{\mathbb{D}}

\newcommand{\Step}{\mathbb{S}}

\newcommand{\Uonetwo}{U^{(1,2)}}

\newcommand{\onetwoP}{{}^{(1,2)}\!P}
\newcommand{\oneP}{{}^{(1)}\!P}
\newcommand{\twoP}{{}^{(2)}\!P}
\newcommand{\elP}{{}^{(l)}\!P}
\newcommand{\elF}{{}^{(l)}\!F}
\newcommand{\oneF}{{}^{(1)}\!F}
\newcommand{\twoF}{{}^{(2)}\!F}
\newcommand{\elK}{{}^{(l)}\!K}
\newcommand{\oneK}{{}^{(1)}\!K}
\newcommand{\twoK}{{}^{(2)}\!K}
\newcommand{\peeK}{{}^{(p)}\!K}
\newcommand{\peeL}{{}^{(p)}\!L}
\newcommand{\peeM}{{}^{(p)}\!M}
\newcommand{\peeW}{{}^{(p)}\!W}
\newcommand{\peeI}{{}^{(p)}\!I}
\newcommand{\elL}{{}^{(l)}\!L}
\newcommand{\oneL}{{}^{(1)}\!L}
\newcommand{\twoL}{{}^{(2)}\!L}
\newcommand{\elM}{{}^{(l)}\!M}
\newcommand{\oneM}{{}^{(1)}\!M}
\newcommand{\twoM}{{}^{(2)}\!M}
\newcommand{\elW}{{}^{(l)}\!W}
\newcommand{\oneW}{{}^{(1)}\!W}
\newcommand{\twoW}{{}^{(2)}\!W}
\newcommand{\elI}{{}^{(l)}\!I}
\newcommand{\oneI}{{}^{(1)}\!I}
\newcommand{\twoI}{{}^{(2)}\!I}
\newcommand{\elmu}{\mu_l}
\newcommand{\onemu}{\mu_1}
\newcommand{\twomu}{\mu_2}
\newcommand{\elnu}{\nu_l}
\newcommand{\onenu}{\nu_1}
\newcommand{\twonu}{\nu_2}
\newcommand{\elj}{j_l}
\newcommand{\onej}{j_1}
\newcommand{\twoj}{j_2}
\newcommand{\elk}{k_l}

\newcommand{\czerot}{c_{[0,t[}}
\newcommand{\oneczerot}{c^1_{[0,t[}}
\newcommand{\twoczerot}{c^2_{[0,t[}}
\newcommand{\dzerot}{d_{[0,t[}}
\newcommand{\onedzerot}{d^1_{[0,t[}}
\newcommand{\twodzerot}{d^2_{[0,t[}}
\newcommand{\fzerot}{f_{[0,t[}}
\newcommand{\gzerot}{g_{[0,t[}}

\newcommand{\Real}{\mathbb{R}}
\newcommand{\Rplus}{\Real_+}
\newcommand{\Comp}{\mathbb{C}}
\newcommand{\Nat}{\mathbb{N}}
\newcommand{\Int}{\mathbb{Z}}

\newcommand{\ip}[2]{\langle #1, #2 \rangle}
\newcommand{\bip}[3][\big]{#1\langle #2, #3 #1\rangle}
\newcommand{\Bip}[2]{\Big\langle #1, #2 \Big\rangle}
\newcommand{\norm}[1]{\lVert #1 \rVert}

\newcommand{\evol}[1]{( #1 )_{0\leq s \leq t}}
\newcommand{\sgp}[1]{( #1 )_{t\geq 0}}
\newcommand{\bsgp}[1]{\big( #1 \big)_{t\geq 0}}
\newcommand{\Bsgp}[1]{\Big( #1 \Big)_{t\geq 0}}





\newcommand{\ol}{\overline}
\newcommand{\ot}{\otimes}

\newcommand{\otol}{\overline{\ot}\,}

\newcommand{\op}{\oplus}


\DeclareMathOperator{\Dom}{Dom}

\DeclareMathOperator{\Lin}{Lin}

\DeclareMathOperator{\id}{id}

\DeclareMathOperator{\im}{Im}


\newenvironment{rlist}
{

\begin{enumerate}}
{\end{enumerate}}





\numberwithin{equation}{section}
\pagestyle{headings}

\begin{document}

\title[Quantum stochastic Lie-Trotter product formula]
{A quantum stochastic \\
Lie-Trotter product formula}
\author{J.\ Martin Lindsay}
\address{Department of Mathematics and Statistics \\
Lancaster University \\
Lancaster LA1 4YF \\ UK}
\email{j.m.lindsay@lancaster.ac.uk}
\author{Kalyan B.\ Sinha}
\address{Jawaharlal Nehru Center for Advanced Scientific Research \\
and Indian Institute of Science \\
Bangalore \\ India} \email{kbs jaya@yahoo.co.in}

\subjclass[2000]{Primary 81S25; Secondary 46L53, 47D06}
\keywords{Trotter product formula, quantum stochastic cocycle,
one-parameter semigroup, random unitary evolution, noncommutative probability}



\begin{abstract}
A Trotter product formula is established for unitary quantum
stochastic processes governed by quantum stochastic differential
equations with constant bounded coefficients.
\end{abstract}

\maketitle

\section*{Introduction}
\label{intro}

The aim of this paper is to establish a quantum probabilistic
counterpart to the well-known Trotter product formula for
one-parameter unitary groups and contraction semigroups (\cite{Tr2})
and its forerunner, the Lie product formula for one-parameter
subgroups of Lie groups (see~\cite{Davies}, [$\text{RS}_{1,2}$].
Some years ago K.R.\ Parthasarathy and the second-named author
obtained a stochastic Trotter product formula for unitary-operator
valued evolutions constituted from independent increments of
independent classical Brownian motions (\cite{PSrandomTrotter}).
This predated the founding of quantum stochastic calculus by Hudson
and Parthasarathy (\cite{HuP}). In this paper Brownian increments
are replaced by the fundamental quantum martingales, namely the
creation, preservation and annihilation processes of quantum
stochastic calculus (\cite{Biane}, \cite{Hudson}, \cite{L},
\cite{Meyer}, \cite{Partha}, \cite{SinhaGoswami}), and we prove a
Lie-Trotter type product formula for unitary quantum stochastic
processes on a Hilbert space which satisfy a quantum stochastic
differential equation with constant bounded coefficients. The case
of quantum stochastic differential equations with unbounded
coefficients, and more general kinds of quantum stochastic cocycle
on operator spaces and $C^*$-algebras, will be addressed in the
forthcoming paper~\cite{more Trotter}.


\section{Unitary quantum stochastic cocycles}
\label{Section: unitary quantum stochastic cocycles}

In this section we fix our notations and recall the essential facts
about quantum stochastic differential equations and unitary quantum
stochastic cocycles that we need here.

Let $\kil$ be a complex Hilbert space, with fixed countable
orthonormal basis, which we refer to as the \emph{noise dimension
space}. Write $\FFock_\kil$ for the symmetric Fock space over the
Hilbert space $\Kil:= L^2(\Rplus; \kil)$ and $\vp(f)$ for the
normalised exponential vector $\exp(-\norm{f}^2/2)\, \ve(f)$, $f\in
\Kil$. When $\Rplus$ is replaced by $[s,t[$, we write $\Kil_{[s,t[}$
and $\FFock_{\kil,[s,t[}$ instead; the continuous tensor
decomposition
\[
\FFock_{\kil} =
\FFock_{\kil,[0,s[} \ot
\FFock_{\kil,[s,t[} \ot
\FFock_{\kil,[t,\infty[},
\]
corresponding to the direct sum decomposition $\Kil = \Kil_{[0,s[}
\op \Kil_{[s,t[} \op \Kil_{[t,\infty[}$, is in constant use below.
For a start, a bounded quantum stochastic process on an
\emph{initial} Hilbert space $\init$ with noise dimension space
$\kil$ is a family of operators $\sgp{X_t}$ on $\init\ot\FFock_\kil$
satisfying the adaptedness condition
\begin{equation}
\label{adapted}
 X_t \in B(\init\ot\FFock_{\kil,[0,t[})\ot I_{\FFock_\kil,[t,\infty[}
= B(\init) \otol B(\FFock_{\kil,[0,t[})\ot
I_{\FFock_\kil,[t,\infty[},
\end{equation}
for all $t\in\Rplus$. For $f\in \Kil$, $f_{[s,t[}$ denotes the
function equal to $f$ on $[s,t[$ and zero elsewhere; $c_{[s,t[}$ is
defined similarly, for $c\in\kil$.
 Let
$\Step_\kil$ and $\Step'_\kil$ denote the subspaces of $\Kil$
consisting of step functions, respectively step functions which have
their discontinuities in the dyadic set $\mathbb{D}:= \{ j2^{-n}:
j,n\in\Int_+ \}$, and let $\Exps_\kil$ and $\Exps'_\kil$ be the
(dense) subspaces $\Lin \{\ve(f): f\in\Step_\kil\}$ and $\Lin
\{\ve(f): f\in\Step'_\kil\}$ of $\FFock_\kil$. For evaluation
purposes, we always take the \emph{right-continuous versions} of
step functions.
 The \emph{order} of a function $f\in\Step'_\kil$
 is the least nonnegative integer $N$
such that $f$ is constant on all intervals of the form
$[j2^{-N},(j+1)2^{-N}[$ for $j\in\Int_+$.

The \emph{time-shift} semigroup $\sgp{\Theta^\kil_t}$
of unital *-monomorphisms of $B(\FFock_\kil)$ is defined by
\[
\Theta^\kil_t(X) = I_{\FFock_{\kil,[0,t[}} \ot
\Gamma(\theta_t^\kil)X\Gamma(\theta_t^\kil)^*, \quad t\in\Rplus,
X\in B(\FFock_\kil),
\]
where $\Fock(\theta_t^\kil): \FFock_\kil \to
\FFock_{\kil,[t,\infty[}$ is the unitary (second quantisation)
operator determined by
\[
\Fock(\theta_t^\kil)\vp(f) = \vp(\theta_t^\kil f) \text{ where }
(\theta^\kil_t f)(s) = f(s-t) \text{ for } s \in [t,\infty[.
\]

Let $\{ \La^\mu_\nu : \mu, \nu \geq 0 \}$ denote the fundamental
quantum semimartingales for the noise dimension space $\kil$, with
respect to its fixed orthonormal basis. Then the quantum stochastic
(QS) integral equation
\begin{equation}
\label{qsde} U_t = I_{\init\ot\FFock_\kil} + \int_0^t U_s F^\mu_\nu
\La^\nu_\mu(ds)
\end{equation}
(where summation over the repeated greek indices is understood), has
a unique strongly continuous solution, consisting of unitary
operators on $\init\ot\FFock_\kil$,
 provided that the matrix of bounded operators $[F^\mu_\nu]$ on
 the initial  space $\init$ satisfies the following
 structural relations (\cite{HuP}).
 It must have the block matrix structure
 \[
 \begin{bmatrix}
 K & [M_k] \\ [L^j] & [W^j_k - \delta^j_k]
 \end{bmatrix}
 \]
of an operator $F\in B(\init \op (\init\ot\kil))$, where
$[L^j]$ is the block column matrix of an \emph{arbitrary} operator $L\in
B(\init;\init\ot\kil)$,
$[W^j_k]$ is the block matrix form of a \emph{unitary} operator
$W\in B(\init\ot\kil)$,
$[M_k]$ is the block row matrix of \emph{the} operator
$M = -L^*W\in B(\init\ot\kil;\init)$, and $K=iH
-\frac{1}{2}L^*L$ for a \emph{selfadjoint} operator $H\in B(\init)$, so
that
\[
M_k = - \sum_{j\geq 1} (L^j)^*W^j_k, \ k\geq 0, \text{ and }
K = iH - \frac{1}{2} \sum_{j\geq 1} (L^j)^* L^j.
\]
These structure relations may equivalently be expressed by the
following two identities, for all
$v = (v^\mu)_{\mu\geq 0}$ in
$\init \op (\init\ot\kil) = \bigoplus_{\mu\geq 0} \init$:
\begin{subequations}
\label{structure}
\begin{align}\label{structure a}
& \sum_{\mu,\nu\geq 0} \bip{v^\mu}{\big( (F^\nu_\mu)^* + F^\mu_\nu +
\sum\nolimits_{j\geq 1} (F^j_\mu)^* F^j_\nu\, \big) v^\nu} \ = 0,
\\ & \label{structure b}
 \sum_{\mu,\nu\geq 0} \bip{v^\mu}{\big( (F^\nu_\mu)^* + F^\mu_\nu +
 \sum\nolimits_{j\geq 1} F_j^\mu (F_j^\nu)^*\, \big) v^\nu} \ = 0;
 \end{align}
\end{subequations}
the first corresponds to isometry and
the second to coisometry.

A contractive quantum stochastic process $\sgp{U_t}$ satisfying
\begin{equation}
\label{qs cocycle} U_{s+t} = U_s \Theta_s(U_t), \ \ U_0 =
I_{\init\ot\FFock}, \quad s,t\geq 0,
\end{equation}
where $\bsgp{\Theta_t := \id_{B(\init)} \otol \Theta_t^\kil}$, is
called a \emph{quantum stochastic contraction cocycle}. If
$\sgp{U_t}$ is a QS contraction cocycle then the operators on
$\init$ defined by
\[
\ip{u}{P_t v} = \ip{u\ot \vp(0)}{U_t\, v\ot\vp(0)} \quad
u,v\in\init,\ t\in\Rplus,
\]
define a contraction semigroup $\sgp{P_t}$  on $\init$ known as the
(\emph{vacuum}) \emph{expectation semigroup} of the cocycle, and the cocycle $\sgp{U_t}$
is called \emph{Markov-regular} if its expectation semigroup is
norm-continuous.

\begin{thm}[\cite{LWjfa}]
\label{1.1*}
Let $\sgp{U_t}$ be a unitary quantum stochastic process on $\init$
with noise dimension space $\kil$. Then the following are
equivalent:
\begin{rlist}
\item
$\sgp{U_t}$ satisfies~\eqref{qsde}, for a matrix of bounded
operators $[F^\mu_\nu]$;
\item
$\sgp{U_t}$ is a Markov-regular quantum stochastic cocycle.
\end{rlist}
\end{thm}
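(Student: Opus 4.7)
The proof breaks into two directions, of which only (ii)$\Rightarrow$(i) requires real work.

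For (i)$\Rightarrow$(ii): assuming $U_t$ solves \eqref{qsde}, I would first verify the cocycle identity \eqref{qs cocycle} by a uniqueness argument. Applying $\Theta_s$ to the QSDE satisfied by $\sgp{U_t}$ yields a QSDE on the shifted interval for $\Theta_s(U_t)$, and then $s\mapsto U_s\Theta_s(U_t)$ satisfies the same QSDE on $[s,s+t[$ with the same initial condition as $\sgp{U_{s+t}}$; uniqueness of the solution identifies the two. Next, to obtain Markov-regularity, I would take the vacuum expectation $\ip{u\ot\vp(0)}{(\cdot)\, v\ot\vp(0)}$ of the integral equation \eqref{qsde}. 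Since $\vp(0)=\ve(0)$ kills the creation, preservation and annihilation integrals except for the ``time'' component, the first fundamental formula reduces \eqref{qsde} to $\ip{u}{P_t v} = \ip{u}{v} + \int_0^t \ip{u}{P_s K v}\, ds$, so $P_t=e^{tK}$ is norm-continuous.

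For the hard direction (ii)$\Rightarrow$(i), the plan is to read off the generator matrix $[F^\mu_\nu]$ from a family of \emph{associated semigroups} on $B(\init)$. For each pair $c,d\in\kil$ define $P^{c,d}_t\in B(\init)$ by
\[
 \ip{u}{P^{c,d}_t v} = e^{-t\ip{c}{d}}\, \ip{u\ot\vp(c_{[0,t[})}{U_t\, v\ot\vp(d_{[0,t[})}, \qquad u,v\in\init.
\]
The cocycle identity \eqref{qs cocycle}, combined with the factorisation of exponential vectors under the continuous tensor decomposition of $\FFock_\kil$, shows that $\sgp{P^{c,d}_t}$ is a one-parameter semigroup on $B(\init)$ for every $c,d$; the vacuum case recovers the expectation semigroup $\sgp{P_t}$.

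The crux is then to promote the norm-continuity of $\sgp{P_t}$, together with unitarity of $U_t$, to norm-continuity of every $\sgp{P^{c,d}_t}$, with generators $G^{c,d}$ depending jointly continuously on $(c,d)$ and quadratically in $c,d$. I would first bound $\norm{P^{c,d}_t-I}$ using the polarisation identity and the Cauchy--Schwarz estimate $\norm{U_t v\ot\vp(d_{[0,t[})-v\ot\vp(0)}^2 \les 2\re\ip{v\ot\vp(0)}{(I-U_t)v\ot\vp(0)} + \mathrm{O}(t)$, the first term being $\mathrm{O}(t)$ by Markov-regularity. The resulting locally uniform control in $c,d$ yields bounded generators $G^{c,d}$, and bilinearity plus the sesquilinearity of the construction in $c,d$ forces the quadratic form
\[
 G^{c,d} = K + M_k d^k + c^j L^j + c^j (W^j_k-\delta^j_k)d^k
\]
for some bounded operators $K$, $[M_k]$, $[L^j]$, $[W^j_k-\delta^j_k]$; assembling these produces the matrix $F=[F^\mu_\nu]\in B(\init\op(\init\ot\kil))$. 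Finally, the unitarity of $U_t$ translates, via the vacuum-coherent matrix elements, into the structure relations \eqref{structure a}--\eqref{structure b} for $F$, and the process $\sgp{V_t}$ obtained by solving \eqref{qsde} for this $F$ satisfies the same identities on $\Exps_\kil$ as $\sgp{U_t}$, so the two coincide by the density of $\init\aot\Exps_\kil$.

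The main obstacle is the norm-continuity transfer: showing that norm-continuity of the \emph{single} vacuum semigroup, together with the algebraic constraints coming from the cocycle identity on coherent vectors and unitarity, forces the whole matrix $[F^\mu_\nu]$ to be bounded. This is where the cocycle identity must be exploited to reduce the $(c,d)$-dependent estimate to the already-controlled vacuum case.
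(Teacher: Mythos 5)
The theorem is quoted from~\cite{LWjfa}; the paper itself gives no proof but indicates two routes for \textup{(ii)}$\Rightarrow$\textup{(i)}: via the Quantum Martingale Representation Theorem, or via the intermediate characterisation \textup{(iii)} (the semigroup decomposition). Your sketch of \textup{(i)}$\Rightarrow$\textup{(ii)} agrees with what the paper indicates, and for the converse you follow the second route (associated semigroups), which the paper calls the ``more powerful method''. Your norm-continuity transfer is sound in outline: unitarity plus $\norm{\vp(d_{[0,t[})-\vp(0)}=\mathrm{O}(\sqrt t\,\norm d)$ gives $\norm{P^{c,d}_t-I}\to 0$ locally uniformly in $(c,d)$, hence each $G^{c,d}$ is bounded. (Minor: the exponent should be $\mathrm{O}(\sqrt t)$ rather than $\mathrm{O}(t)$, which does not affect the conclusion; and the extra normalisation $e^{-t\ip{c}{d}}$ is inconsistent with using $\vp$ rather than $\ve$ --- with $\vp$ the associated semigroup is defined by~\eqref{assoc sgp} with no extra factor, and the generator is~\eqref{Gcd}.)

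There is, however, a genuine gap at the crux. You assert that ``bilinearity plus the sesquilinearity of the construction in $c,d$ forces the quadratic form'' $G^{c,d}=K+L^c+M_d+W^c_d+\cdots$. This is not an argument. The map $(c,d)\mapsto P^{c,d}_t$ is not (sesqui)linear, since exponential vectors are genuinely nonlinear in their arguments; that the bounded generators $G^{c,d}$ depend affine-sesquilinearly on $\wh c=\binom{1}{c}$ and $\wh d=\binom{1}{d}$ (up to the Gaussian normalisation term $-\tfrac12(\norm c^2+\norm d^2)I$) is precisely the nontrivial structure one must prove, and it is where the real work in~\cite{LWjfa}/\cite{LWmathproc} lies. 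One cannot simply ``read it off'': it requires either an expansion of exponential vectors and control of the derivative of the matrix elements at $t=0$, or an a priori appeal to the Quantum Martingale Representation Theorem as in~\cite{PSmartingale}, \cite{HuL}. Relatedly, your closing identification of the solution $V_t$ of the QSDE with $U_t$ ``by density'' tacitly uses the semigroup decomposition~\eqref{semigp rep} --- that both processes are determined on $\init\aot\Exps'_\kil$ by their common associated semigroups --- and this is exactly the characterisation~\textup{(iii)} that the paper highlights; it should be invoked explicitly rather than folded into a density remark.
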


The implication (i) $\Rightarrow$ (ii) follows from the form that
solutions of such QS differential equations take, by virtue of the
time-homogeneity of the quantum noises:
\[
I_{\FFock_{\kil,[0,t[}} \ot \Fock(\theta^\kil_t) \La^\mu_\nu[a,b]
\Fock(\theta^\kil_t)^* = \La^\mu_\nu[a+t,b+t],
\]
and the time-independence of the coefficients of the QS differential
equation.

The converse implication (ii) $\Rightarrow$ (i) may be deduced from
the Quantum Martingale Representation Theorem (\cite{PSmartingale})
applied to the regular quantum martingale
\[
\Bsgp{U_t - \int_0^t U_s K \, ds}
\]
in which the operator $K$ is the generator of the expectation semigroup
of $\sgp{U_t}$ (see~\cite{HuL}). However the more powerful method of
proof in~\cite{LWjfa} goes via the following intermediate
characterisation which is of considerable use itself,
as we shall see below:
\begin{rlist}
\item[(iii)]
there are semigroups $\{\sgp{P^{c,d}}: c,d\in\kil \}$ such that, for
all $f,g\in\Step'_\kil$ and $t\in\Rplus$,
\begin{equation}
\label{semigp rep}
\begin{split}
\ip{u\ot\vp(f_{[0,t[})}{V_t\ v\ot&\vp(g_{[0,t[})} =
\\ &
\ip{u} {P_{t_1-t_0}^{f(t_0),g(t_0)} \cdots \
P_{t_{m+1}-t_m}^{f(t_m),g(t_m)}v},
\end{split}
\end{equation}
where $t_0 = 0$, $t_{m+1} = t$ and
$\{ t_1 < \cdots < t_m \}\subset \Dy$
is the (possibly empty) union of the sets of discontinuity of
$f$ and $g$ in the open interval $]0,t[$.
\end{rlist}

\begin{rems}
The matrix of bounded operators $[F^\mu_\nu]$ necessarily satisfies
the structural relations required for unitarity~\eqref{structure}.

The identity~\eqref{semigp rep} is known as the \emph{semigroup
decomposition} and the collection $\{ \sgp{P^{c,d}_t}: c,d\in\kil\}$
as the \emph{associated semigroups} of the cocycle.
Clearly the associated semigroups are determined by
\begin{equation}
\label{assoc sgp}
\ip{u}{P_t^{c,d}v} = \ip{u\ot \vp(c_{[0,t[})}{U_t\,
v\ot\vp(d_{[0,t[})}, \quad u,v\in\init,
\end{equation}
and $\sgp{P^{0,0}}$ is the expectation semigroup of the cocycle.

In fact, each associated semigroup $\sgp{P^{c,d}}$ is itself
the expectation semigroup of another unitary QS cocycle,
namely the cocycle
\[
\Bsgp{U^{c,d}_t:= (I_\init\ot W^c_t)^*\, U_t (I_\init\ot W^d_t)},
\]
where the \emph{Weyl cocycles} are defined by
\[
W^c_t \vp(f) = e^{-i \im \ip{c_{[0,t[}}{f}}
\vp(f + c_{[0,t[}), \quad f\in\Step_\kil,\ c\in\kil,\
t\in\Rplus.
\]

Markov-regularity for a QS contraction cocycle actually implies that
all of its associated semigroups are norm-continuous. In fact, in
terms of the block matrix form of $[F^\mu_\nu]$, the semigroup
$\sgp{P^{c,d}}$ has bounded generator
\begin{equation}
\label{Gcd}
 G_{c,d}:= K + L^c + M_d + W^c_d - \tfrac{1}{2}(\norm{c}^2
+ \norm{d}^2)I_\init,
\end{equation}
where, in terms of basis expansions of $c$ and $d$, the operators
here are defined as follows:
\[
L^c  = \sum_{j\geq 1} \ol{c^j} L^j , \ \
M_d  = \sum_{k\geq 1} d^k M_k  \text{ and }
W^c_d  = \sum_{j,k\geq 1} \ol{c^j}d^k W^j_k,
\]
the convergence here being in the strong operator topology
(see~\cite{LWmathproc}).
\end{rems}
Given a unitary QS cocycle $\sgp{U_t}$, the family $\evol{U_{s,t}:=
\Theta_s(U_{(t-s)})}$ is a \emph{time-homogeneous adapted unitary
evolution}, that is: for all $a \geq 0$ and $0 \leq r\leq s \leq t$:
\begin{rlist}
\item
$U_{s+a,t+a} = \Theta_a (U_{s,t})$;
\item
 $U_{s,t} \in B(\init)\ot I_{\FFock_{\kil,[0,t[}} \otol B(\FFock_{\kil,[s,t[}) \ot
I_{\FFock_{\kil,[t,\infty[}}$;
\item
$U_{r,t} = U_{r,s}\, U_{s,t}$.
\end{rlist}

Conversely, if $\evol{U_{s,t}}$ is such an evolution then
$\sgp{U_t:= U_{0,t}}$ defines a unitary QS cocycle, and it is easily
seen that the passages between QS cocycle and adapted
time-homogeneous evolution are mutually inverse.

The corresponding QS integral equation satisfied by $\evol{U_{s,t}}$
is
\[
U_{r,t} =
I_{\init\ot\FFock_\kil} + \int_r^t U_{r,s} F^\mu_\nu
\La^\nu_\mu(ds).
\]
Adapted evolutions that are not time-homogeneous arise as solutions
of QS differential equations with time-dependent coefficients
$[F^\mu_\nu]$.

\section{Trotter product of quantum stochastic cocycles}
\label{section: Trotter product}

Let $\sgp{U_t^1}$ and $\sgp{U_t^2}$ be two unitary QS cocycles on
the same initial space $\init$, with noise dimension spaces $\kil_1$
and $\kil_2$ having fixed countable orthonormal bases. Suppose
that they are both Markov-regular, equivalently that they satisfy QS
differential equations
\begin{equation}
\label{qsdel} dU_t^l = U_s^l\ \elF^{\elmu}_{\elnu}\,
\La^{\elnu}_{\elmu}(dt), \quad U_0^l = I_{\init\ot \FFock^{(l)}},
\end{equation}
$l=1,2$, for matrices of bounded operators
$[\oneF^{\onemu}_{\onenu}]$ and $[\twoF^{\twomu}_{\twonu}]$
satisfying the structural relations
which guarantee unitarity of the
processes. Here $\FFock^{(1)}$ and $\FFock^{(2)}$ denote the Fock
spaces $\FFock_{\kil_1}$ and $\FFock_{\kil_2}$ respectively.

Our aim is to obtain a unitary cocycle $\sgp{U_t}$ as a Lie-Trotter type
product of the cocycles $\sgp{U_t^1}$ and $\sgp{U_t^2}$, in the same
spirit as that of~\cite{PSrandomTrotter}. To this end let $\kil$ be
the noise dimension space $\kil_1\op\kil_2$, set
$\FFock=\FFock_\kil$, and, by `concatenating' the orthonormal bases for
$\kil_1$ and $\kil_2$ to form an orthonormal basis of $\kil$, let
$[F^\mu_\nu]$ be the matrix of bounded operators on $\init$ having
block matrix form
\begin{equation}
\label{F1 box F2}
\begin{bmatrix}
K & [M_k] \\
[L^j] & [W^j_k - \delta^j_k I_{\init}]
\end{bmatrix}
=
\begin{bmatrix}
\oneK + \twoK & \oneM & \twoM \\
\oneL & \oneW - \oneI & 0 \\
\twoL & 0 & \twoW - \twoI
\end{bmatrix}.
\end{equation}
Here $\elI := I_{\init\ot\kil_l}$ and
\[
[\elF^{\elmu}_{\elnu}] =
\begin{bmatrix}
\elK & [\elM_{\elk}] \\
[\elL^{\elj}] & [\elW^{\elj}_{\elk} - \delta^{\elj}_{\elk}
I_{\init}]
\end{bmatrix}
\]
is the block matrix decomposition of $\elF$, in which
\[
\elK = iH_l - \frac{1}{2} \sum_{j_l\geq 1} (\elL^{\elj})^*\,
\elL^{\elj} \quad \text{ and } (H_l)^* = H_l,
\]
for $l=1,2$. (We are slightly cheating in terms of indices
since if the noise dimension space $\kil_1$ is infinite dimensional
then we cannot \emph{exactly} count $1,2, \cdots , \dim \kil_1, \dim
\kil_1 + 1, \cdots$. However all \emph{is} justified by a proper
indexing, or alternatively by working coordinate-free as
in~\cite{more Trotter}.) Thus, setting $H = H_1 + H_2$,
\begin{align*}
K &=  iH - \frac{1}{2} \sum_{j\geq 1} (L^j)^* L^j \\ &= iH_1 + iH_2
- \frac{1}{2} \sum_{j_1\geq 1} (\oneL^{\onej})^* (\oneL^{\onej}) -
\frac{1}{2} \sum_{j_2\geq 1} (\twoL^{\twoj})^* (\twoL^{\twoj}),
\text{ and }\\
[M_k] &=
\Big[
-\oneL^*\,\oneW
\quad -\twoL^*\,\twoW
\Big]
= \left[- \sum_{j\geq 1} (L^j)^*W^j_k \right].
\end{align*}
Thus $[F^\mu_\nu]$ satisfies the structure relations~\eqref{structure}
for unitarity of the solution of the QS
differential equation~\eqref{qsde}.

For $c^l,d^l \in \kil_l$, let
$\sgp{\elP_t^{c^l,d^l}}$ denote the corresponding associated semigroup
of the cocycle $\sgp{U^l_t}$ ($l=1,2$). For each $n\in\Nat$ define a
unitary process $\sgp{U^{(1,2)}_n (t)}$ as follows:
\[
U^{(1,2)}_n(t) :=
\big( \Uonetwo_{0,2^{-n}} \Uonetwo_{2^{-n},2\cdot2^{-n}} \cdots
\Uonetwo_{t^n_{-1},t^n_0} \big) \Uonetwo_{t^n_0,t},
\quad t\in\Rplus,
\]
where, with $[\,\cdot\,]$ denoting the integer part,
\begin{equation}
\label{tnk}
t^n_k:= 2^{-n} \big( [2^nt] + k \big)
\quad \text{ for } k \in \Int, k \geq -[2^nt],
\end{equation}
and, letting $\Sigma_{2,1}$ denote the tensor flip
$B(\init\ot\FFock^{(2)}\ot\FFock^{(1)}) \to
B(\init\ot\FFock^{(1)}\ot\FFock^{(2)}) = B(\init\ot\FFock)$,
\begin{equation}
\label{Wst}
\Uonetwo_{s,t} := \Theta_s(\Uonetwo_{t-s}), \quad \quad 0\leq s\leq t,
\end{equation}
where
\[
\Uonetwo_t:=
\big( U^1_t \ot I^{(2)} \big) \Sigma_{2,1}\big( U^2_t \ot I^{(1)} \big),
\quad t\in\Rplus.
\]
Here $I^{(l)}$ is the identity operator on $\FFock^{(l)}$ ($l=1,2$),
and we are using the isometric isomorphism
$\FFock^{(1)}\ot\FFock^{(2)}=\FFock$. Also define a family of
contractions on $\init$ by
\[
\ip{u}{\onetwoP^{c,d}_t v} =
\ip{u\ot\vp(c_{[0,t[})}{\Uonetwo_t\, v\ot\vp(d_{[0,t[})},
\quad u,v\in\init,
\]
for $c,d \in \kil$ and $t\in\Rplus$ (cf.\ \eqref{assoc sgp}).

\begin{rems}
For each $t\in\Rplus$, $n\in\Nat$ and $k$ as in~\eqref{tnk},
\[
t^n_0 \leq t^{n+1}_0 \leq t < t^{n+1}_1 < t^n_1 \text{ and }
|t^n_{k+1} - t^n_k| = 2^{-n}.
\]
In particular the sequence $(t^n_1)$ decreases to $t$
and the sequence $(t^n_0)$ is
nondecreasing and converges to $t$.

In general, neither $\sgp{\Uonetwo_t}$ nor $\sgp{U^{(1,2)}_n(t)}$ are
cocycles themselves. However they are both unitary QS processes and the
two-parameter process $\evol{\Uonetwo_{s,t}}$ enjoys the factorisations
\begin{equation}
\label{W factorisation} \Uonetwo_{s,t} \in B(\init) \ot I_{[0,s[} \otol
B(\FFock_{[s,t[}) \ot I_{[t,\infty[}
\end{equation}
in which $I_{[0,s[}$ and $I_{[t,\infty[}$ denote the identity operators on
$\FFock_{[0,s[}$ and $\FFock_{[t,\infty[}$. By
the same token, $\sgp{\onetwoP^{c,d}}$ is typically not a semigroup.
\end{rems}

\begin{lemma}
\label{lemma: W}
Let $\sgp{U^1_t}$ and $\sgp{U^2_t}$ be unitary QS cocycles on $\init$
with noise dimension spaces $\kil_1$ and $\kil_2$ respectively.
Set $\kil := \kil_1\op\kil_2$ and let $\sgp{\Uonetwo_t}$ be as defined above.
Let $t\in\Rplus$, then for
$c = \binom{c^1}{c^2}$,
$d = \binom{d^1}{d^2} \in\kil= \kil_1\op\kil_2$,
\[
\onetwoP^{(c,d)}_t =
\oneP_t^{c^1,d^1}\, \twoP_t^{c^2,d^2},
\]
and, for $f,g\in\Step'_\kil$ and $n$ greater than the orders of both $f$
and $g$,
\begin{multline*}
\ip{u\ot\vp(\fzerot)}{U^{(1,2)}_n (t)\, v\ot \vp(\gzerot)} = \\
\Bip{u}{\big(\onetwoP_{2^{-n}}^{f(0),g(0)}\
\onetwoP_{2^{-n}}^{f(2^{-n}),g(2^{-n})} \cdots \
\onetwoP_{2^{-n}}^{f(t^n_{-1}),g(t^n_{-1})} \big)
\onetwoP_{(t-t^n_0)}^{f(t^n_0),g(t^n_0)}\, v}.
\end{multline*}
\end{lemma}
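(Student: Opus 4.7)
My plan is to prove (1) by direct unpacking of the definition of $\Uonetwo_t$ using the tensor factorisation of Fock space, and then derive (2) by iterating this single-block identity across a dyadic partition of $[0,t]$ adapted to the step functions $f$ and $g$.

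For part (1), the key observation is that under the canonical identification $\FFock = \FFock^{(1)} \ot \FFock^{(2)}$ arising from $L^2(\Rplus; \kil) = L^2(\Rplus; \kil_1) \op L^2(\Rplus; \kil_2)$, the normalised exponential vector factorises as $\vp(c_{[0,t[}) = \vp(c^1_{[0,t[}) \ot \vp(c^2_{[0,t[})$ (the normalisation constants multiply correctly by additivity of the squared norm over the direct sum). I interpret $\Sigma_{2,1}(U^2_t \ot I^{(1)})$ as the operator on $\init \ot \FFock^{(1)} \ot \FFock^{(2)}$ that acts as $U^2_t$ on $\init \ot \FFock^{(2)}$ and trivially on $\FFock^{(1)}$. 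Expanding $U^2_t(v \ot \vp(d^2_{[0,t[})) = \sum_i w_i \ot \xi_i$, the inner product splits: the $\FFock^{(2)}$-pairing produces $\twoP^{c^2,d^2}_t v = \sum_i \ip{\vp(c^2_{[0,t[})}{\xi_i}\, w_i$ on $\init$, which is then pre-composed by $\oneP^{c^1,d^1}_t$ via the $\FFock^{(1)}$-pairing with $U^1_t$, giving the claimed identity.

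For part (2), take the partition $0 = s_0 < s_1 < \cdots < s_{N-1} = t^n_0 < s_N = t$ with $s_k = k\,2^{-n}$ for $k < N := [2^n t] + 1$. Because $n$ exceeds the orders of $f$ and $g$, these step functions are constant on each sub-interval $[s_k, s_{k+1}[$; this is the only place where the hypothesis on $n$ is used, and it covers the final short interval $[t^n_0, t[\,\subset\,[t^n_0, t^n_1[$ too. The continuous tensor factorisation of $\FFock_\kil$ across this partition gives $\vp(g_{[0,t[}) = \bigotimes_{k=0}^{N-1} \vp((g(s_k))_{[s_k,s_{k+1}[})$ and likewise for $f$, while the factorisation \eqref{W factorisation} ensures that the $N$ factors of the product $U^{(1,2)}_n(t) = \Uonetwo_{s_0,s_1} \cdots \Uonetwo_{s_{N-1},s_N}$ act on disjoint Fock slots (composing non-trivially only on $\init$). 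Pairing slot-by-slot and invoking time-homogeneity $\Uonetwo_{s_k,s_{k+1}} = \Theta_{s_k}(\Uonetwo_{s_{k+1}-s_k})$ reduces each slot's contribution to a matrix element of $\Uonetwo_{s_{k+1}-s_k}$ against constant-exponential vectors, namely $\onetwoP^{f(s_k),g(s_k)}_{s_{k+1}-s_k}$; composing these in $\init$ yields the asserted formula.

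The main obstacle is structural: as the remarks preceding the lemma warn, neither $\sgp{\Uonetwo_t}$ nor $\sgp{\onetwoP^{c,d}_t}$ obeys a semigroup or cocycle law, so one cannot chain time-intervals via such an identity. The argument must therefore rely on the spatial factorisation \eqref{W factorisation} rather than on any temporal composition rule, which is precisely what allows the slot-wise reduction to part (1). Secondary care is needed with the flip $\Sigma_{2,1}$ in (1) and with the right-continuity convention for step functions at dyadic grid points in (2), which legitimises the identification $f_{[s_k,s_{k+1}[} = (f(s_k))_{[s_k,s_{k+1}[}$.
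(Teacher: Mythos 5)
Your proposal is correct and follows essentially the same route as the paper: part (1) by tensor-factorising the exponential vectors and reading off $E^*U^2_tF = \twoP^{c^2,d^2}_t$ (you write this as a sum expansion, but the content is identical), and part (2) by slicing $\vp(f_{[0,t[})$ and $\vp(g_{[0,t[})$ across the dyadic partition, using the spatial localisation \eqref{W factorisation} of each $\Uonetwo_{s_k,s_{k+1}}$ to reduce to a product of contractions on $\init$, and identifying each slot's matrix element via time-homogeneity. Your explicit remark that the argument cannot use any semigroup or cocycle law for $\Uonetwo$ or $\onetwoP$, and must instead lean entirely on the Fock-space localisation, is exactly the point the paper leaves implicit.
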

\begin{proof}
These both follow from factorisations; the first from
\begin{multline*}
\bip{u\ot\vp(\czerot)}{\Uonetwo_t\, v\ot\vp(\dzerot)} = \\ 
\Bip{u\ot\vp(\oneczerot)}{U^1_t\big((E^* U^2_t F\, v) \ot\vp(\onedzerot)\big)},
\end{multline*}
where $E$ and $F$ are the isometric operators $\init \to
\init\ot\FFock^{(2)}$ defined respectively by $v\mapsto
v\ot\vp(\twoczerot)$ and $v\mapsto v\ot\vp(\twodzerot)$; in turn, the second
from~\eqref{W factorisation} and $ \vp(h)
=\vp(h_{[0,s[})\ot\vp(h_{[s,t[}) \ot \vp(h_{[t,\infty[})$, for $h=f,g$.
\end{proof}
We now come to our quantum stochastic product formula.
For its proof we use the following version of the
classical Lie product formula.
For bounded operators $Z_1$ and $Z_2$ on $\init$,
\begin{equation}
\label{original}
\big( e^{hZ_1} e^{hZ_2}\big)^{[t/h]} \to e^{t(Z_1 + Z_2)} \text{ as
} h \to 0,
\end{equation}
in operator norm, uniformly on bounded time intervals
(see e.g.\ Theorem VIII.29 of~\cite{RS1},
where the proof is obviously valid for operators).
\begin{thm}
\label{thm: qs trotter} Let $\sgp{U^1_t}$ and $\sgp{U^2_t}$ be
unitary QS cocycles on $\init$ with noise dimension spaces $\kil_1$
and $\kil_2$, satisfying the quantum stochastic differential
equations~\eqref{qsdel}, and let $\sgp{U_t}$ be the unitary QS
cocycle on $\init$ with noise dimension space $\kil:=\kil_1\op\kil_2$
satisfying the QS differential equation~\eqref{qsde} where
$[F^\mu_\nu]$ is given by~\eqref{F1 box F2}. Then,
\begin{equation}
\label{starstar}
U^{(1,2)}_n (t) \to U_t \text{ as } n \to \infty,
\end{equation}
in the strong operator topology on $B(\init\ot\FFock)$, for each
$t\geq 0$.
\end{thm}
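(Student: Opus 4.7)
The plan is to prove the convergence~\eqref{starstar} first weakly on the total set of vectors $v \ot \vp(g_{[0,t[})$ with $v \in \init$ and $g \in \Step'_\kil$, and then upgrade to strong convergence using the unitarity of $U^{(1,2)}_n(t)$ and $U_t$ via the polarisation identity $\norm{(U^{(1,2)}_n(t) - U_t)\xi}^2 = 2\norm{\xi}^2 - 2\re\ip{U^{(1,2)}_n(t)\xi}{U_t \xi}$. Fix such $u,v,f,g$ and let $n$ exceed the orders of both $f$ and $g$. Combining the two assertions of Lemma~\ref{lemma: W} expresses $\ip{u \ot \vp(\fzerot)}{U^{(1,2)}_n(t)\, v \ot \vp(\gzerot)}$ as an iterated product of $[2^n t]$ ``Trotter factors'' of the form $\oneP_{2^{-n}}^{c^1,d^1}\, \twoP_{2^{-n}}^{c^2,d^2}$ followed by a single ``tail factor'' $\onetwoP_{t-t^n_0}^{f(t^n_0),g(t^n_0)}$ of duration $t - t^n_0 \in [0, 2^{-n})$. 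On the limit side, condition~(iii) of Theorem~\ref{1.1*} gives
\[
\ip{u \ot \vp(\fzerot)}{U_t\, v \ot \vp(\gzerot)} = \ip{u}{P^{c_0,d_0}_{s_1-s_0} \cdots P^{c_m,d_m}_{s_{m+1}-s_m} v},
\]
where $s_0 = 0$, $s_{m+1} = t$, $\{s_1 < \cdots < s_m\}$ is the joint discontinuity set of $f,g$ in $]0,t[$, and $(c_i, d_i) := (f(s_i), g(s_i))$. Since each $s_i$ is an integer multiple of $2^{-n}$, the dyadic Trotter grid meshes exactly with the jumps.

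The central algebraic step is the \emph{generator splitting}
\[
G_{c,d} = G^1_{c^1,d^1} + G^2_{c^2,d^2},
\]
where $G_{c,d}$ is the generator of $\sgp{P^{c,d}}$ and $G^l_{c^l,d^l}$ that of $\sgp{\elP^{c^l,d^l}}$. This drops out of~\eqref{Gcd} applied to the block form~\eqref{F1 box F2}: the block-diagonal structure of the unitary part precludes any cross-terms between $\kil_1$ and $\kil_2$, yielding the split forms $K = \oneK + \twoK$, $L^c = \oneL^{c^1} + \twoL^{c^2}$, $M_d = \oneM_{d^1} + \twoM_{d^2}$, $W^c_d = \oneW^{c^1}_{d^1} + \twoW^{c^2}_{d^2}$, with $\norm{c}^2 + \norm{d}^2$ splitting correspondingly. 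Given this, the classical Lie-Trotter formula~\eqref{original} applied with $Z_l = G^l_{c^l,d^l}$ and $h = 2^{-n}$ yields, for each $T > 0$,
\[
\bigl(\oneP_{2^{-n}}^{c^1,d^1}\, \twoP_{2^{-n}}^{c^2,d^2}\bigr)^{[T 2^n]} \To P^{c,d}_T
\]
in operator norm, uniformly for $T$ in bounded intervals.

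To conclude, partition the $[2^n t]$ Trotter factors according to which of the blocks $[s_i, s_{i+1}[$ the corresponding step time inhabits; by the meshing observation this is an exact partition, with $(s_{i+1} - s_i) 2^n$ factors in block $i$ for $i < m$ and $(t^n_0 - s_m) 2^n$ regular factors plus the sub-$2^{-n}$ tail factor in the last block. Applying the Lie-Trotter limit blockwise (with $T = s_{i+1} - s_i$, and $T = t - s_m$ on the main part of the last block) gives norm convergence to $P^{c_i,d_i}_{s_{i+1}-s_i}$ on each block, while the tail factor $\oneP_{t-t^n_0}^{c^1_m,d^1_m}\, \twoP_{t-t^n_0}^{c^2_m,d^2_m}$ tends to $I_\init$ in norm by norm-continuity of the $\elP^{c^l,d^l}$ semigroups at~$0$. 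Since all operators involved are contractions on $\init$ and the number of blocks $m+1$ depends only on $f,g$, the telescoping estimate $\norm{\prod_i A^n_i - \prod_i B_i} \leq \sum_i \norm{A^n_i - B_i}$ for contractions assembles the blockwise norm limits into norm convergence of the full product, matching the limit side. The principal technical delicacy is this bookkeeping -- meshing the dyadic Trotter grid with the dyadic jumps of $f,g$ and absorbing the vanishing tail -- rather than anything analytic beyond the classical Lie-Trotter formula and the generator-splitting identity.
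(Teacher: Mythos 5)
Your proposal is correct and follows essentially the same route as the paper's proof: reduce strong to weak convergence via unitarity, express both sides via the semigroup decomposition (Theorem~\ref{1.1*}(iii) and Lemma~\ref{lemma: W}), identify the generator-splitting $G_{c,d} = G^{(1)}_{c^1,d^1} + G^{(2)}_{c^2,d^2}$ from the block form~\eqref{F1 box F2}, and apply the classical Lie product formula~\eqref{original} blockwise together with continuity of operator composition on bounded sets (which your telescoping estimate makes explicit). The bookkeeping about meshing the dyadic grid with the jump points of $f,g$ and absorbing the tail factor is carried out in the same way, just stated a little more explicitly than in the paper.
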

\begin{proof}
Let $t\in\Rplus$. First note that, since $U_t$ is unitary and each
$U^{(1,2)}_n (t)$ is unitary and so a contraction, it suffices to
prove that $U^{(1,2)}_n (t) \to U_t$ in the weak operator topology.
Also, the uniform boundedness of the operators $ U^{(1,2)}_n (t)$
means that it suffices to fix $u,v\in\init$ and $f=
\binom{f^1}{f^2}, g= \binom{g^1}{g^2}\in\Step'_\kil \subset
L^2(\Rplus; \kil = \kil_1\op\kil_2)$, and prove the following:
\begin{multline}
\label{suffices}
\bip{u\ot\vp(f_{[0,t[})}{U^{(1,2)}_n (t)\,
v\ot\vp(g_{[0,t[})}  \to
\\
 \bip{u\ot\vp(f_{[0,t[})}{U_t\, v\ot\vp(g_{[0,t[})}.
\end{multline}
By the semigroup representation~\eqref{semigp rep},
\begin{equation}
\label{RHS} \text {R.H.S. of~\eqref{suffices} } = \ip{u}
{P_{t_1-t_0}^{f(t_0),g(t_0)} \cdots \
P_{t_{m+1}-t_m}^{f(t_m),g(t_m)}v},
\end{equation}
where $t_0 = 0$, $t_{m+1} = t$ and $\{ t_1 < \cdots < t_m \} \subset
\mathbb{D}$ are the points in $]0,t[$ (if any) where $f$ or $g$ has
a discontinuity. For $n$ greater than the orders of the step
functions $f$ and $g$, Lemma~\ref{lemma: W} implies that the L.H.S.
of~\eqref{suffices} equals
\begin{multline*}
 \Bip{u}{( \oneP_{2^{-n}}^{f^1(t_0),g^1(t_0)}\,
\twoP_{2^{-n}}^{f^2(t_0),g^2(t_0)} )^{[2^n (t_1 - t_0)]}
\cdots \\ \cdots
(
\oneP_{2^{-n}}^{f^1(t_m),g^1(t_m)}\,
\twoP_{2^{-n}}^{f^2(t_m),g^2(t_m)}
)^{[2^n (t_{m+1} - t_m)]} \\
(
\oneP_{(t-t^n_0)}^{f^1(t_m),g^1(t_m)}\,
\twoP_{(t-t^n_0)}^{f^2(t_m),g^2(t_m)}
)
\, v}.
\end{multline*}
The Lie product formula~\eqref{original} and the joint continuity
of operator composition on bounded sets, therefore
implies that
\begin{equation}
\label{LHS limit} \lim_{n\to\infty} (\,\text{L.H.S.
of~\eqref{suffices}}\,) = \ip{u} {Q_{t_1-t_0}^{f(t_0),g(t_0)} \cdots
\ Q_{t_{m+1}-t_m}^{f(t_m),g(t_m)}v},
\end{equation}
where $\sgp{Q^{c,d}_t}$ is the semigroup generated by
$G^{(1)}_{c^1,d^1} + G^{(2)}_{c^2,d^2}$.
Now
\begin{multline*}
G^{(1)}_{c^1,d^1} + G^{(2)}_{c^2,d^2} = \\
\oneK + \oneL^{c^1} + \oneM_{d^1} + \oneW^{c^1}_{d^1}
-\tfrac{1}{2}(\norm{c^1}^2 + \norm{d^1}^2)I_\init \\
+ \twoK + \twoL^{c^2} + \twoM_{d^2} + \twoW^{c^2}_{d^2}
-\tfrac{1}{2}(\norm{c^2}^2 + \norm{d^2}^2)I_\init \\
= K + L^{c} + M_{d} + W^{c}_{d} -\tfrac{1}{2}(\norm{c}^2 +
\norm{d}^2)I_\init,
\end{multline*}
which, by~\eqref{Gcd}, is the generator of the semigroup $\sgp{P^{c,d}_t}$,
 for each $c,d\in\kil$.
The result therefore follows
from~\eqref {LHS limit} and~\eqref{RHS}.
\end{proof}

\begin{rem}
The joint continuity of operator composition on bounded sets
also gives a straightforward extension of this result to
time-homogeneous adapted unitary evolutions $\evol{U_{s,t}}$:
\[
U^{(1,2)}_n(s,t) \to U_{s,t} \ \text{ as } n \to \infty,
\]
in the strong operator topology, for all $0\leq s \leq t$,
where
\[
U^{(1,2)}_n(s,t):=
 \Uonetwo_{s,s^n_1}
 \big( \Uonetwo_{s^n_1,s^n_2} \Uonetwo_{s^n_2,s^n_3} \cdots
\Uonetwo_{t^n_{-1},t^n_0} \big) \Uonetwo_{t^n_0,t}.
\]
\end{rem}

\section{Extensions and an example}
\label{Section: Extensions and example}

The quantum stochastic product formula also holds for Markov-regular QS
\emph{contraction} cocycles, with the same proof, since these are
equally characterised as contraction processes which satisfy a QS
differential equation of the form~\eqref{qsde},
in other words Theorem~\ref{1.1*} still holds; contractivity of the
cocycle corresponds precisely to the matrix of coefficients of the
QS differential equation satisfying the inequality
\[
\sum_{\mu,\nu\geq 0} \bip{v^\mu}{\big( (F^\nu_\mu)^* + F^\mu_\nu +
\sum\nolimits_{j\geq 1} (F^j_\mu)^* F^j_\nu\, \big) v^\nu} \ \leq 0,
\]
equivalently,
\[
\sum_{\mu,\nu\geq 0} \bip{v^\mu}{\big( (F^\nu_\mu)^* + F^\mu_\nu +
\sum\nolimits_{j\geq 1} F_j^\mu (F_j^\nu)^*\, \big) v^\nu} \ \leq 0,
\]
 for all
 $v = (v^\mu)_{\mu\geq 0} \in
 \init \ot (\Comp \op \kil) = \bigoplus_{\mu\geq 0} \init$
 (\cite{Fag}, \cite{MoP}),
 cf.\ the equalities~\eqref{structure} for the unitary case.
 However in this case the convergence of the Trotter
products is only assured in the weak operator topology (or rather in
the hybrid norm $\FFock_\kil$-weak operator topology,
see~\cite{LWmathproc}).

Using an extension of the standard Trotter product formula
to products of several semigroups, our QS product formula
extends to cover a finite number of QS unitary (or
contraction) cocycles $\sgp{U_t^1}$, ... , $\sgp{U_t^p}$. The
coefficient matrix for the QS differential equation of the resulting
QS cocycle will then have the block matrix form:
\[
\begin{bmatrix}
\oneK + \cdots + \peeK & \oneM & \twoM & \cdots & \peeM \\
\oneL & \oneW - \oneI & 0 & \cdots & 0 \\
\twoL & 0 & \twoW -\twoI & \ddots & \vdots \\
\vdots & \vdots & \ddots & \ddots & 0 \\
\peeL & 0 & \cdots & 0 & \peeW -\peeI
\end{bmatrix}.
\]

\begin{eg}
The cocycles considered in~\cite{PSrandomTrotter} are the random
unitaries defined by
\[
U^l(s,t,\omega^l) = e^{i(\omega^l(t)-\omega^l(s))H_l},  \quad 0\leq
s \leq t,
\]
for $l=1,2$, where $\omega^1$ and $\omega^2$ are paths of two
independent classical Brownian motions $\sgp{B^1_t}$ and
$\sgp{B^2_t}$, and $H_1$ and $H_2$ are selfadjoint operators on a
Hilbert space $\init$. Recall the notation~\eqref{tnk}. By viewing
$\omega:=(\omega^1,\omega^2)$ as a path of the two-dimensional
Brownian motion $\sgp{(B^1_t,B^2_t)}$ with probability space
$\Omega$, and
\[
\Uonetwo(s,t,\omega) := e^{i(\omega^1(t)-\omega^1(s))H_1}\,
e^{i(\omega^2(t)-\omega^2(s))H_2}, \quad 0\leq s \leq t,
\]
as multiplication operators on $L^2(\Omega; \init)$, it is
shown---under the assumption that the nonnegative symmetric operator
$(H_1)^2 + (H_2)^2$ is selfadjoint---that the sequence
$(U_n^{(1,2)}(s,t,\omega))_{n\geq 1}$ of unitary operators:
\[
\Uonetwo(s,s_1^n,\omega)\big(\Uonetwo(s_1^n,s_2^n,\omega)\cdots
\Uonetwo(t_{-1}^n,t_0^n,\omega)\big)\Uonetwo(t_0^n,t,\omega)
\]
weak-operator converges to the unique contraction-operator valued process
satisfying the classical stochastic differential equations
\begin{multline*}
d_t\, U(s,t,\omega)v =
 i\, U(s,t,\omega)H_1v\, dB^1_t(\omega) + i\,
U(s,t,\omega)H_2v\,
dB^2_t(\omega) \\
 -\tfrac{1}{2}\, U(s,t,\omega)\big((H_1)^2+(H_2)^2\big)v\, dt
\end{multline*}
($v\in\Dom ((H_1)^2 + (H_2)^2)$), and that if the process $\evol{U(s,t,\omega)}$ is
unitary-valued then the convergence is strong.

\begin{rem}
Under the assumption of selfadjointness of $\sum_{l=1}^d (H_l)^2$,
the corresponding result is shown to hold for any finite number of
such unitary cocycles
$\sgp{U^l(s,t,\omega)}$, $l = 1,\ldots,d$.
\end{rem}

This may be recast in our quantum stochastic setting by identifying
the Brownian motion $\sgp{B^l_t}$ with the quantum stochastic process
$\sgp{Q^l_t:=(A^{l\, *}_t + A^l_t)^-}$ on $\FFock^{(l)}$ (where the
bar denotes operator closure), and setting
\[
U^l_t = e^{iH_l(t)}, \quad t\geq 0, \] where $H_l(t)$ is the
selfadjoint operator $H_l\ot Q^{(l)}_t$ on $\init \ot \FFock^{(l)}$,
for $l=1, \cdots , d$. Here however the coefficients of the
corresponding differential equation are unbounded,
with coefficients having block matrix form
\[
\elF =
\begin{bmatrix}
-\frac{1}{2}(H_l)^2 &  iH_l \\
iH_l & 0 \\
\end{bmatrix}, \quad l=1, \cdots , d,
\]
and
\[
F =
\begin{bmatrix}
-\frac{1}{2} K &  iH_1 & \cdots & iH_d \\
iH_1 & 0 & \cdots & 0 \\
\vdots & \vdots & \ddots & \vdots \\
iH_d & 0 & \cdots  & 0
\end{bmatrix}
\text{ where }
K = (H_1)^2 + \cdots (H_d)^2.
\]
This class of example is discussed in more detail
in~\cite{more Trotter}.
\end{eg}

\section{Concluding remarks}
\label{Section: concluding}

The methods of this paper extend to more general QS cocycles.
Firstly, quantum stochastic Trotter product formulae may be obtained
for completely contractive QS cocycles on operators spaces and
completely positive QS cocycles on $C^*$-algebras. Secondly,
\emph{strongly continuous} (as opposed to Markov-regular) QS
cocycles may be shown to satisfy the QS Trotter product formula
developed here. Conversely, the formula may be used to construct QS
cocycles from simpler cocycles with lower dimensional noises. This
yields potential applications to multidimensional diffusions. The
basic conditions under which Trotter products converge is that the
sum of sufficiently many pairs of associated semigroup generators
are pregenerators of contraction semigroups. Here the assumption of
analyticity of the expectation semigroups of the constituent cocycles
helps (\cite{LS holomorphic}). As in the Markov-regular case, strong
(as opposed to weak) operator convergence holds for Trotter products
of isometric QS cocycles if and only if the limiting cocycle is
isometric. Coisometry, on the other hand, is equivalent to isometry
of the \emph{dual} cocycle (see~\cite{L}). Unitarity for strongly continuous
contraction cocycles is assured when the cocycle satisfies a QS
differential differential whose coefficients satisfy \emph{Feller
conditions} (see~\cite{SinhaGoswami}).

All these extensions are treated in~\cite{more Trotter}. They are
facilitated by characterisations of QS cocycles in terms of (a small
number of) their associated semigroups (\cite{Accardi},
\cite{LWCBonOS}). Here Skeide's multidimensional generalisation
(\cite{Skeide}) of a theorem of Parthasarathy and Sunder
(\cite{PSu}) plays a key role.
 The homomorphic property of Trotter
product limits of Evans-Hudson type cocycles on operator algebras is
tackled in~\cite{DGS}.

\section*{Acknowledgements}
Both authors acknowledge support from the UK-India Education and
Research Initiative (UKIERI). KBS is grateful for the support of a
Bhatnagar Fellowship from CSIR, India, and JML is grateful for the
hospitality of the Delhi Centre of the Indian Statistical Institute,
where this collaboration began,
and the J.N.~Centre for Advanced Research, Bangalore.

\end{document}